\theoremstyle{plain}
\newtheorem{theorem}{Theorem}[section]
\newtheorem{prop}[theorem]{Proposition}
\newtheorem{cor}[theorem]{Corollary}
\theoremstyle{definition}
\newtheorem{ex}[theorem]{Example}
\newtheorem{rem}[theorem]{Remark}
\newtheorem{definition}[theorem]{Definition}
\newcommand\C{{\mathbb C}}
\newcommand\R{{\mathbb R}}
\newcommand{\de}{\partial}
\newcommand{\debar}{\overline{\partial}}
\newcommand{\bismut}{\nabla^B}
\newcommand{\g}{\mathfrak{g}}
\newcommand{\ov}[1]{\overline{ #1}}
\newcommand\T{{\mathbb T}} 
\begin{document}
\title[Special Hermitian metrics and Lie groups]{Special Hermitian metrics and Lie groups}
\author{Nicola Enrietti and  Anna Fino}
\address{Dipartimento di Matematica G. Peano \\ Universit\`a di Torino\\
Via Carlo Alberto 10\\
10123 Torino\\ Italy} \email{nicola.enrietti@unito.it}
 \email{annamaria.fino@unito.it}
\subjclass[2000]{32J27, 53C55, 53C30, 53D05}
\thanks{This work was supported by the Projects MIUR ``Riemannian Metrics and Differentiable Manifolds'',
``Geometric Properties of Real and Complex Manifolds'' and by GNSAGA
of INdAM}
\begin{abstract} A Hermitian metric on a complex manifold
is called  strong K\"ahler with torsion (SKT)
if its fundamental $2$-form $\omega$ is
$\partial \overline \partial$-closed. We review some properties
of strong KT metrics  also in  relation with  symplectic  forms  taming complex structures. Starting from a $2n$-dimensional SKT Lie algebra $\mathfrak g$  {and using} a Hermitian flat connection on $\mathfrak g$ we construct a $4n$-dimensional SKT Lie algebra. We apply this method to
 some $4$-dimensional SKT  Lie algebras.  Moreover, we classify symplectic  forms  taming complex structures on $4$-dimensional Lie algebras.\end{abstract}
\maketitle

\section{Introduction}

A $J$-Hermitian metric $g$ on a complex manifold $(M,J)$ is called SKT (\emph{strong K\"ahler with torsion}) or \emph{pluriclosed}
 if the fundamental 2-form $\omega(\cdot,\cdot)=g(J\cdot,\cdot)$ satisfies
$$
\partial\ov \partial \omega=0
$$
(see for instance  \cite {GHR}).
For complex surfaces a  Hermitian metric satisfying the SKT condition is  {\emph{standard}} in the terminology of Gauduchon \cite{Gau2} and on a compact manifold a standard metric  can be found in the conformal
class of any given Hermitian metric. However,  the theory is completely different in higher dimensions.
The study of SKT  metrics  is strictly related to the study of the geometry of the Bismut connection. Indeed, any Hermitian manifold $(M, J,g)$ admits a unique connection $\nabla^B$ preserving $g$ and $J$ and such that the tensor
$$
c (X, Y, Z) = g(X, T^B(Y , Z ))
$$
is totally skew-symmetric, where by $T^B$ we denote the torsion of $\nabla^B$ (see \cite{Gau}).
This connection was introduced by Bismut in \cite{Bismut}  to prove a local index
formula for the Dolbeault operator for non-K\"ahler manifolds.
The torsion $3$-form $c$ is related to the fundamental form $\omega$  of $g$ by
$$
c(X, Y, Z)  =  - d \omega (JX, JY, JZ)
$$
and it is well known that  $
 \partial\ov \partial \omega=0$   is equivalent to $ dc=0$.

SKT metrics have a central role in type II string theory, in $2$-dimensional supersymmetric
$\sigma$-models (see \cite{GHR,strominger})  and they have also relations with generalized
K\"ahler geometry (see for instance
\cite{GHR,Gu,Hi2,AG,CG,FT}).
Indeed, by \cite{Gu,AG}   it follows  that a generalized K\"ahler
structure  on a  $2n$-dimensional manifold $M$ is equivalent to a pair of SKT structures  $(J_+, g)$ and $(J_-, g)$ such that  
$  d^c_+  \omega_+  = -d^c_-  \omega_- $, where $\omega_{\pm} ( \cdot, \cdot) = g( J_{\pm} \cdot, \cdot)$ are the
fundamental $2$-forms associated to the Hermitian structures
$(J_{\pm}, g)$ and $d^c_{\pm} = i (\overline \partial_ {\pm} -
\partial_{\pm})$. The closed $3$-form $ d^c_+  \omega_+$  is called the
{\it torsion} of the generalized K\"ahler structure and the   structure is said {\em untwisted} or {\em twisted} according to the fact  that  the cohomology class $[d^c_+  \omega_+] \in H^3 (M, \R)$ vanishes or not.  In particular, any K\"ahler metric $(J, g)$ determines  a generalized K\" ahler structure by setting $J_+ = J$ and $J_-
= \pm J$.

Recently, SKT metrics have been studied by many
authors. For instance, new simply-connected  compact  SKT examples have been
constructed by Swann in \cite{Sw} via the twist construction and SKT structures on $S^1$-bundles over almost contact manifolds have been studied in \cite{FFUV}. Moreover, in  \cite{FT2} it has been shown that the blow-up  of an SKT  manifold   at a point or along a compact submanifold  admits an SKT
metric.

For real Lie groups admitting left-invariant  SKT metrics there are  some   classification results in  dimension $4$, $6$ and $8$. More precisely,
$6$-dimensional  (resp. $8$-dimensional) SKT nilpotent Lie groups have been classified  in \cite{FPS}  (resp. in \cite{EFV}  and for a particular class in \cite{RT}) and  a classification of SKT
solvable Lie groups of dimension $4$ has been  obtained in \cite{MS}. 

General results {are  known also}  for nilmanifolds, i.e. compact quotients of simply connected nilpotent Lie groups  $G$ by discrete subgroups $\Gamma$. Indeed, in \cite{EFV} it has been proved that
if $(M=G/\Gamma,J)$ is a  nilmanifold  (not a torus) endowed with an invariant complex structure $J$ and if there exists a $J$-Hermitian {\rm SKT} metric $g$ on $M$, then  $G$ must be $2$-step nilpotent and $M$  is a  total space of a  principal holomorphic torus bundle over a torus.

No general restrictions  for the existence of  SKT  and generalized K\"ahler structures
  are known in the case of
solvmanifolds, i.e. compact quotients of solvable Lie
groups  by  discrete subgroups. A structure theorem by
\cite{Ha2} states that a solvmanifold carries a K\" ahler
structure if and only it is covered by a complex torus which has a
structure of a complex torus bundle over a complex torus.

As far as we know, the only known solvmanifolds carrying  a
generalized K\" ahler structure are the   Inoue surface of type ${\mathcal S}^0$ defined in \cite{In}  and  ${\mathbb T}^{2k}$-bundles over  the   Inoue surface of type ${\mathcal S}^0$   constructed in \cite{FT}. 

A  quaternionic analogous of K\"ahler manifolds   is given by  {\em
hyper-K\"ahler with torsion}  (shortly {\em HKT})  manifolds, that are hyper-Hermitian manifolds $(M^{4n}, J_1, J_2, J_3, h)$
admitting a hyper-Hermitian connection with totally skew-symmetric
torsion, i.e. for which the three Bismut connections associated to
the three Hermitian structures $(J_r, h)$, $r =1,2,3$  coincide.
This geometry was introduced by Howe and Papadopoulos \cite{HP} and
later studied for instance  in~\cite{GP, FG, BDV, BF, Sw}.
 In  \cite{BF} it was  shown that the
tangent Lie algebra of an HKT Lie algebra may admit an HKT
structure, 
constructing in this way   a family of new compact strong HKT manifolds.

In this paper we adapt the previous construction to the SKT and generalized K\"ahler  case.  Starting from a $2n$-dimensional SKT  (generalized K\"ahler)  Lie algebra $\mathfrak g$ and using a suitable  connection  on $\mathfrak g$  we construct a    $4n$-dimensional SKT  (generalized K\"ahler) Lie algebra. We apply the  previous procedure to
some of the $4$-dimensional SKT  Lie algebras, obtaining  in this way new SKT examples in dimension 8 and recovering the generalized K\"aher example found in \cite{FT}.

{The existence of an SKT metric $\omega$ on  a complex manifold $(M, J)$  such that $\de\omega=\debar\beta$ for a $\de$-closed (2,0)-form $\beta$   is equivalent to the existence of a symplectic form taming $J$ (\cite{EFV}).}

We recall that an almost complex structure $J$ on a compact $2n$-dimensional symplectic manifold $(M, \Omega)$ is  said to be \emph{tamed} by $\Omega$ if
$$
\Omega(X,JX)>0
$$
for any non-zero vector field $X$ on $M$.  When $J$ is a complex structure (i.e. $J$ is integrable) and $\Omega$ tames $J$, the pair $(\Omega, J)$ has been called a {\em Hermitian-symplectic structure} in \cite{ST}.
Although any symplectic structure always admits tamed almost complex structures,
it is still an open problem to find an example of a compact complex manifold admitting a
taming symplectic structure but no K\"ahler structures. From \cite{ST,LiZhang} there exist no compact examples in dimension $4$.
Moreover, the study of taming symplectic structures in dimension $4$ is related to a more general conjecture of Donaldson (see for instance  \cite{donaldson,weinkove,LiZhang}).

In \cite{EFV} some negative results for the existence of taming symplectic structures on compact quotients of Lie groups by discrete subgroups were obtained.
It was shown  that  if $M$ is
a nilmanifold (not a torus) endowed with an invariant complex structure $J$, then $(M, J)$ does
not admit any symplectic form taming  $J$.

{The taming symplectic structures are related to static solutions of
a new metric flow on complex manifolds (see  \cite{ST}).} Indeed, Streets and
Tian constructed an elliptic  flow using the Ricci tensor associated to the Bismut  connection instead of
the Levi-Civita connection, and it turns out that this flow preserves the SKT condition and that the existence of some particular type of static SKT metrics implies the existence
of a taming symplectic structure on the complex manifold (\cite{ST2}).
 Static SKT metrics on Lie groups
have been also recently studied in  \cite{En}.

In the last section of the paper we prove that a 4-dimensional Lie algebra $\g$ endowed with a complex structure $J$ admits a taming symplectic structure if and only if $(\g,J)$ admits a K\"ahler metric. Moreover, under this condition, every SKT metric induces a symplectic form taming $J$.

\medskip

\noindent \textbf{Acknowledgements.} This work has been partially
supported {by} Project MICINN (Spain) MTM2008-06540-C02-01/02,
Project MIUR ``Geometria Differenziale e Analisi Globale"  and
by GNSAGA of INdAM.
The second author wants to thank the organizers for the invitation and  their wonderful hospitality in Brno.

 \section{Preliminaries}

 Let $M$ be a $2n$-dimensional manifold.
We recall that an almost  complex structure $J$ on M  is
 integrable   if and only if
  the Nijenhuis tensor
$$N(X,Y)= J([X,Y]-[JX,JY])-([JX,Y]+[X,JY])$$
 vanishes for all vector fields  $X,Y$. In this case   $J$  is called a complex structure on $M$.

A  Riemannian metric $g$ on a complex manifold $(M, J)$ is said to
be  Hermitian if it is compatible with $J$, i.e. if  $g (JX, JY) = g
(X, Y)$ for any $X, Y$. In  \cite{Gau}, Gauduchon proved that if
$(M,J,g)$ is an Hermitian manifold, then there is a $1$-parameter family of
canonical Hermitian connections on $M$ characterized by  the properties
of their torsion tensor. In particular,
the {\em Bismut connection} is the unique connection $\nabla^B$  such that
$$
\nabla^B J = \nabla^B g = 0
$$
and its torsion  tensor
$$c (X, Y, Z) = g (X, T^B (Y, Z))$$ is totally skew-symmetric, where
$T^B$ is the torsion of  $\nabla^B$.  The geometry associated to the
Bismut connection is called KT geometry and when $c =0$ it coincides
with the usual K\"ahler geometry. 

\begin{definition} Let $(M, J,g)$ be a Hermitian manifold. 
If  the torsion 3-form $c$ of the Bismut connection  is  $d$-closed  or equivalently if $ \partial \overline \partial \omega=0$,  then   the Hermitian metric $g$ on   a
complex manifold $(M, J)$  is called {\em{strong K\"ahler with torsion}} (shortly SKT). 

\end{definition}
An interesting case is when $g$ is compatible with two complex structures $J_+$ and $J_-$.
 We recall the following
 
\begin{definition} A Riemannian  manifold  $(M,  g)$  is called
  {\em  generalized K\"ahler}   if it has a pair
 of  SKT  structures $(J_+, g)$  and $(J_-, g)$ for which $c_-= - c_+$,
 where $c_{\pm}$ denotes the torsion $3$-form  of the Bismut connection   associated to the  SKT structure $(J_{\pm},  g)$.
 \end{definition}

{G}eneralized K\"ahler structures were introduced  in \cite{GHR} and studied by M. Gualtieri in his
PhD thesis \cite{Gu} in the more general context of generalized
complex geometry, which contains complex and symplectic geometry as extremal special cases and shares important properties with them.

When $M$ is $4$-dimensional, by \cite{AG}  there are 
two classes of generalized K\"ahler structures, according to whether  the complex structures  $J_+$ and $J_-$  induce 
the same or different orientations on $M$. In  \cite{AG}  
compact  $4$-dimensional generalized K\"ahler manifolds $(M^4, J_{\pm}, g)$   for which  $J_+$ and  $J_-$
commute have been classified. 

In this paper we  consider  Lie algebras  endowed with SKT  structures  which induce  left-invariant SKT structures  on  the corresponding simply connected Lie groups.

  Let $\mathfrak g$ be a Lie algebra  with an  (integrable)  complex
structure $J$  and an inner product $g$ compatible with $J$. If the
associated K\"ahler form $\omega (X,Y)= g(JX,Y)$ satisfies
$d\omega=0$, where
$$
d \omega(X, Y, Z) = - \omega ([X, Y], Z) - \omega([Y, Z], X) -
\omega([Z, X], Y),
$$
for any $X, Y, Z \in {\mathfrak g}$,
 the Hermitian Lie algebra  $(\mathfrak g , J,g)$ is  K\"ahler.   Equivalently, $(\mathfrak g , J,g)$ is  K\"ahler
if and only if  $\nabla^{g} J =0$, where $\nabla^{g} $ is the Levi-Civita connection of $g$. If $\partial \overline \partial \omega =0$, the Hermitian Lie algebra  $(\mathfrak g , J,g)$ is  SKT.
We recall that for a  Lie  group $G$ with a
left-invariant  Hermitian structure {$(J,g)$} the Bismut connection $\nabla^B$ on $G$  is given by
the following equation
{\begin{equation} \label{Bismut}
\begin{array}{l}
g(\bismut_XY,Z) = \frac{1}{2} \{ g([X,Y]-[JX,JY],Z) - g([Y,Z]+[JY,JZ],X)\\
\phantom{g(\bismut_XY,Z) = }\, -g([X,Z]-[JX,JZ],Y) \}
\end{array}
\end{equation}}
for any $X, Y, Z \in {\mathfrak g}$ (see  \cite{DF}). 

If $G$ is nilpotent and admits a left-invariant SKT structure {$(J,  g)$}, then by \cite{EFV} $G$ has to be $2$-step nilpotent and $J$ has to preserve the center of $G$.  {Moreover, nilpotent Lie groups 
   cannot admit any left-invariant  generalized K\"ahler structure unless they are abelian \cite{Ca}.}
   
   In the solvable case there  is  a classification of SKT
 Lie groups of dimension $4$ \cite{MS}, but there are no  general results in higher dimensions.
Examples of  SKT and generalized K\"ahler solvable Lie groups admitting compact quotients 
have been shown  in \cite{FT, FT3}.

 \section{SKT structures on tangent Lie algebras}

  Let $\mathfrak g$ be a $2n$-dimensional Lie algebra   endowed with a   complex
structure $J$  and an inner product $g$ compatible with $J$.  Assume   that $D$ is a flat connection on
$\mathfrak g$ preserving the Hermitian structure, i.e.  such that $D g=0$ and $D J =0$.

  Consider the tangent Lie algebra $T_{D} \,  \g :=
{\mathfrak g} \ltimes_{D} {\R^{2n}}$ {endowed} with the Lie
bracket
\begin{equation}\label{t_D}  [(X_1,X_2), (Y_1,Y_2)]=([X_1,
Y_1] , D _{X_1} Y_2 - D_{Y_1} X_2 )
\end{equation}
and {with the} complex structure
\begin{equation}  \label{complexontg}
{\tilde J}(X_1, X_2)= (J X_1, J X_2).
\end{equation}

Since $D$ is flat, the Lie bracket \eqref{t_D} on $T_{D} \,  \g$ satisfies the Jacobi identity. The integrability of the complex
structure ${\tilde J}$ on $T_{D} \,  \g$ follows from
the fact that $J$ is integrable and parallel with respect
to $D$ (see \cite[Proposition~3.3]{BD}).

Let $\tilde g$  be the inner product on
$T_{D} \, \g$ induced by $g$ such that $( {\mathfrak g}, 0)$
and  $(0, {\mathfrak g})$ are orthogonal. Then $\tilde g$ is
compatible with ${\tilde J}$, that
is, $(T_D\,{\mathfrak g}, \tilde J, \tilde g)$ is a
Hermitian Lie algebra. 
 
In a similar way  as for  the HKT case (see \cite[Proposition 4.1]{BF}) we can prove the following

\begin{prop} \label{properties} Let  $({\mathfrak g}, J, g)$ be
a Hermitian  Lie algebra and  $D$  a flat connection such that $Dg=0$ and
$D J =0$. Then the Hermitian structure $(\tilde J, \tilde g)$ on $T_D \,  \g$ is   SKT if and only if  $(J, g)$ is  SKT on $\g$.
\end{prop}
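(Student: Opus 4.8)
The plan is to compute the torsion $3$-form $\tilde c$ of the Bismut connection of $(T_D\,\mathfrak g,\tilde J,\tilde g)$ explicitly in terms of the bracket \eqref{t_D}, and compare $d\tilde c$ with $dc$ on $\mathfrak g$. First I would use formula \eqref{Bismut} together with the decomposition $T_D\,\mathfrak g=(\mathfrak g,0)\oplus(0,\R^{2n})$ to identify $\tilde c$. Writing $V=(\mathfrak g,0)$ and $W=(0,\R^{2n})$, the key structural observation is that $[V,V]\subseteq V$, $[V,W]\subseteq W$ and $[W,W]=0$; moreover $\tilde J$ preserves both $V$ and $W$, $\tilde g$ makes them orthogonal, and on $V$ everything is a copy of $(\mathfrak g,J,g)$. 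Feeding this into \eqref{Bismut} I expect to find that the only nonzero components of $\tilde c$ are: (i) $\tilde c(X,Y,Z)=c(X,Y,Z)$ when $X,Y,Z\in V$, and (ii) a "mixed" component on $V\times W\times W$ built out of the connection $D$, something of the shape $\tilde c\big((X,0),(0,U),(0,V)\big)=\pm\,g\big(D_{X}(JU),JV\big)\mp\,g\big(D_{X}U,V\big)$ up to the precise signs — and by $DJ=0$ and $Dg=0$ this combination actually vanishes. (Indeed $g(D_X JU,JV)=g(JD_XU,JV)=g(D_XU,V)$.) So in fact $\tilde c$ is supported entirely on $\Lambda^3 V^*$ and equals $c$ there; this is the main computational step and the place where the hypotheses $DJ=0$, $Dg=0$ and flatness are all used (flatness is what makes $T_D\,\mathfrak g$ a Lie algebra at all, via Proposition preceding this one).

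Next I would compute $d\tilde c$ using the Lie-algebra differential, i.e. the formula $d\tilde c(A_0,\dots,A_3)=\sum_{i<j}(-1)^{i+j}\tilde c([A_i,A_j],A_0,\dots,\widehat{A_i},\dots,\widehat{A_j},\dots,A_3)$, on the four disjoint types of arguments according to how many of the $A_i$ lie in $W$. If all four arguments are in $V$, then since all brackets of $V$-elements stay in $V$ and $\tilde c|_{\Lambda^3 V}=c$, we get exactly $d\tilde c=dc$ there. If exactly one argument lies in $W$: every bracket either lands in $V$ (the $V$–$V$ brackets, but then $\tilde c$ is evaluated on two $V$'s and one $W$, which is zero) or lands in $W$ (the $V$–$W$ brackets, but then $\tilde c$ is evaluated on one $W$ coming from the bracket plus the remaining $W$ — wait, there is only one $W$ among the arguments, so the remaining ones are in $V$; thus $\tilde c$ gets one $W$ and two $V$'s, again zero). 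Hence $d\tilde c=0$ on $V\times V\times V\times W$. If two or more arguments lie in $W$: since $[W,W]=0$ and $\tilde c$ needs all three slots, one checks that every surviving term in the sum evaluates $\tilde c$ on a configuration with at least two $W$'s, which is zero; so $d\tilde c=0$ there too. Therefore $d\tilde c=0$ on $T_D\,\mathfrak g$ if and only if $dc=0$ on $\mathfrak g$, which by the Definition is exactly the statement that $(\tilde J,\tilde g)$ is SKT iff $(J,g)$ is SKT.

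The main obstacle I anticipate is the bookkeeping in the first step: one has to be careful with the sign conventions in \eqref{Bismut} and with the fact that in \eqref{t_D} the bracket on the $W$-part, $D_{X_1}Y_2-D_{Y_1}X_2$, is \emph{not} antisymmetric in an obvious way before using that $D$ is a connection on the \emph{abelian} factor, so the "$[W,W]=0$" and "$[V,W]\subseteq W$" claims need the precise reading of \eqref{t_D}. Once the mixed component of $\tilde c$ is correctly shown to vanish using $DJ=0$ and $Dg=0$, the rest is the purely combinatorial case-check on $d\tilde c$ sketched above, which is routine. This is precisely the strategy used in \cite[Proposition~4.1]{BF} for the HKT case, adapted by replacing the three Bismut connections there with the single Bismut connection and the quaternionic structure with $\tilde J$; since the HKT argument only ever used one of the three (coinciding) Bismut connections at a time together with the parallelism of the complex structures under $D$, it transfers verbatim.
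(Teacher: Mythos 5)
Your proposal is correct and follows essentially the same route as the paper: the paper's proof likewise reduces everything to showing that the Bismut data of $(T_D\,\g,\tilde J,\tilde g)$ project onto those of $(\g,J,g)$, recording the formula $\tilde g(\tilde\nabla^B_{(X_1,X_2)}(Y_1,Y_2),(Z_1,Z_2))=g(\nabla^B_{X_1}Y_1,Z_1)+g(D_{X_1}Y_2,Z_2)$ and concluding $\tilde c((X_1,X_2),(Y_1,Y_2),(Z_1,Z_2))=c(X_1,Y_1,Z_1)$ and $d\tilde c=dc$ on the first components, exactly as in your support-on-$\Lambda^3V^*$ argument. Your cancellation of the mixed $V\times W\times W$ component via $Dg=0$, $DJ=0$ and the subsequent case check for $d\tilde c$ are just the expanded version of the paper's two displayed identities, which it in turn attributes to the HKT computation in \cite{BF}.
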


 \begin{proof} The proof is already contained in \cite{BF}. Indeed,  by  a direct computation we have
that the Bismut connection $\tilde \nabla^B$ of the new  Hermitian 
structure $(\tilde J, \tilde g)$ on $T_{D} \,
{\mathfrak g} $ is related to the Bismut connection $\nabla^B$ of
the Hermitian structure $(J,  g)$ on ${\mathfrak g}$ by
\begin{equation} \label{exprtildenablaB}
 \tilde g  (\tilde \nabla^B_{(X_1, X_2)} (Y_1, Y_2), (Z_1, Z_2)) =  g(\nabla^B_{X_1} Y_1, Z_1) + g (D_{X_1} Y_2, Z_2),
\end{equation}
for any $X_i, Y_i, Z_i \in {\mathfrak g}$, $i = 1,2,3$. Therefore,
denoting  by $\tilde c$ and $c$ the torsion $3$-forms of the Bismut connections   on $T_{D} \, {\mathfrak g}$ and $\mathfrak g$,
respectively, we obtain
\begin{equation}\label{eqc}
\tilde c ((X_1, X_2), (Y_1, Y_2), (Z_1, Z_2)) = c (X_1, Y_1, Z_1),
\end{equation}
and
\begin{equation}\label{eqdc}
d\tilde c  \,  ((X_1, X_2), (Y_1, Y_2), (Z_1, Z_2), (W_1, W_2)) = dc
(X_1, Y_1, Z_1, W_1).
\end{equation}
This shows that the strong  condition is
preserved.
\end{proof}
 
\begin{rem}
 To construct the tangent Lie algebra we consider the flat connection $D$ as a representation of $\g$ on $\R^{2n}$. If we choose the adjoint representation {${\rm ad}$} of $\mathfrak g$ on  $\mathfrak g$ then the semidirect product  {$\g\ltimes_{{\rm ad}}\R^{2n}$} is the Lie algebra of the Lie group $TG$, the tangent bundle over $G$ \cite{BD}. In this case the conditions $DJ=Dg=0$ are satisfied if and only if $(\g,J)$ is a complex Lie algebra and the inner product $g$ is bi-invariant. Therefore this construction allows us to lift an invariant SKT structure $(\hat \g,\hat J)$ from a Lie group $G$ to its tangent bundle $TG$ if and only if $(G,\hat J)$ is a complex Lie group and $\hat g$ is a bi-invariant metric.
  \end{rem}

Since a generalized K\"ahler
structure  on a  $2n$-dimensional Lie algebra  $\mathfrak g$ is equivalent to a pair of  SKT  structures  $(J_+, g)$ and $(J_-, g)$,   such that  
$ c_+    = -  c_-$, {as a consequence of  the previous proposition we can prove} the following

 \begin{cor}\label{cor}
Let  $({\mathfrak g}, J_{\pm}, g)$ be
a generalized K\"ahler  Lie algebra and  $D$  a flat connection such that $Dg=0$ and
$D J_{\pm} =0$. Then,  $(\tilde J_{\pm}, \tilde g)$ on $T_D \,  \g$ is   generalized K\"ahler.
\end{cor}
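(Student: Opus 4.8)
The plan is to reduce the statement to Proposition~\ref{properties} applied twice, once to each of the two SKT structures, together with the explicit relation \eqref{eqc} between the torsion $3$-forms.

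First I would observe that the hypotheses $Dg=0$ and $DJ_{\pm}=0$ say precisely that $D$ is a flat Hermitian connection for \emph{both} Hermitian Lie algebras $(\mathfrak g, J_+, g)$ and $(\mathfrak g, J_-, g)$. Hence the tangent Lie algebra $T_D\,\mathfrak g$ carries the two complex structures $\tilde J_{\pm}(X_1,X_2)=(J_{\pm}X_1,J_{\pm}X_2)$, each of which is integrable by \cite[Proposition~3.3]{BD} since $J_{\pm}$ is integrable and $D$-parallel, and the induced inner product $\tilde g$ is compatible with each $\tilde J_{\pm}$ because $g$ is compatible with each $J_{\pm}$ and the splitting $(\mathfrak g,0)\oplus(0,\mathfrak g)$ is $\tilde g$-orthogonal and preserved by both $\tilde J_{\pm}$.

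Next, applying Proposition~\ref{properties} separately to $(J_+,g)$ and to $(J_-,g)$, I conclude that both $(\tilde J_+,\tilde g)$ and $(\tilde J_-,\tilde g)$ are SKT on $T_D\,\mathfrak g$. It then remains to check the compatibility $\tilde c_+=-\tilde c_-$ of the two torsion $3$-forms. But formula \eqref{eqc}, which holds for any Hermitian structure with $D$-parallel complex structure, gives
\[
\tilde c_{\pm}\big((X_1,X_2),(Y_1,Y_2),(Z_1,Z_2)\big)=c_{\pm}(X_1,Y_1,Z_1),
\]
and since the generalized K\"ahler condition on $\mathfrak g$ means $c_+=-c_-$, we obtain $\tilde c_+=-\tilde c_-$ on $T_D\,\mathfrak g$. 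Hence $(\tilde J_{\pm},\tilde g)$ is a pair of SKT structures with opposite torsion, i.e. a generalized K\"ahler structure.

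I do not expect a genuine obstacle here: the only point that needs attention is that a single connection $D$ must simultaneously parallelize $J_+$, $J_-$ and $g$, which is exactly the hypothesis; everything else is the linearity of \eqref{eqc} in the torsion form together with the twofold application of Proposition~\ref{properties}. One could moreover note, using \eqref{eqdc}, that the cohomological ``twisted/untwisted'' dichotomy is inherited as well, though this is not needed for the statement.
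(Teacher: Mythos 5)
Your proposal is correct and follows essentially the same route as the paper: both arguments rest on the integrability and compatibility of $(\tilde J_{\pm},\tilde g)$ together with the relations \eqref{eqc} and \eqref{eqdc} between the Bismut torsion forms on $T_D\,\mathfrak g$ and $\mathfrak g$, which immediately give $\tilde c_+=-\tilde c_-$ and the strong condition. The only cosmetic difference is that you invoke Proposition~\ref{properties} twice where the paper applies \eqref{eqdc} directly, but these are the same computation.
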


\begin{proof}
We know that $\tilde J_-$ and $\tilde J_+$ are integrable and that the metric $\tilde g$ is still compatible with both complex structures. Moreover, if $({\mathfrak g}, J_{\pm}, g)$ is a generalized K\"ahler Lie algebra, then $c_+=-c_-$ and $dc_+=dc_-=0$. Therefore, {equations \eqref{eqc} and \eqref{eqdc} yield} $\tilde c_+=-\tilde c_-$ and $d\tilde c_+=d\tilde c_-=0$, i.e. $(T_D \,  \g,\tilde J_{\pm}, \tilde g)$ is a generalized K\" ahler  Lie algebra.
\end{proof}

For  Hermitian Lie algebras ${\mathfrak g}$ such that the commutator $[{\mathfrak g}, {\mathfrak g}]$ does not coincide with ${\mathfrak g}$ we can show that it is always possible to find an Hermitian flat connection $D$. 

\begin{prop}\label{connection}
Let $(\g,J,g)$ be a  2n-dimensional  SKT Lie algebra such that $\g^1=[\g,\g]\subsetneq\g$. Then $\g$ admits a flat connection $D$ such that $DJ=Dg=0$.
\end{prop}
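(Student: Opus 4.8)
The plan is to exhibit $D$ explicitly, as a family of skew-Hermitian endomorphisms of $\g$ of the simple form $D_X=\phi(X)\,A$, where $\phi$ is a linear functional annihilating $\g^1$ and $A$ is one fixed endomorphism of $\g$ that is skew-symmetric for $g$ and commutes with $J$. The SKT condition plays no role in this construction; it is needed only to deduce, via Proposition~\ref{properties}, that the resulting $T_D\,\g$ is again SKT.

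First I would use the hypothesis $\g^1=[\g,\g]\subsetneq\g$: the abelianization $\g/\g^1$ is non-zero, hence there is a non-zero $\phi\in\g^*$ with $\phi|_{\g^1}=0$, and in particular $\phi([X,Y])=0$ for all $X,Y\in\g$. Next I would pick any endomorphism $A$ of $\g$ that is skew-symmetric with respect to $g$ and commutes with $J$ (such endomorphisms form a subspace of $\mathrm{End}(\g)$ of real dimension $n^2$, isomorphic to $\mathfrak{u}(n)$, so $A$ can be chosen non-zero), and set
$$
D_XY:=\phi(X)\,AY,\qquad X,Y\in\g.
$$
What remains is a routine verification. Flatness: $D_XD_Y-D_YD_X=\phi(X)\phi(Y)(A^2-A^2)=0$, while $D_{[X,Y]}=\phi([X,Y])\,A=0$ since $[X,Y]\in\g^1\subseteq\ker\phi$, so the curvature of $D$ vanishes and $X\mapsto D_X$ is a representation of $\g$ on $\g$. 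Parallelism of $g$ and $J$: with the usual meaning on a Lie algebra one has $(D_Xg)(Y,Z)=-g(D_XY,Z)-g(Y,D_XZ)=-\phi(X)\big(g(AY,Z)+g(Y,AZ)\big)=0$ and $(D_XJ)Y=D_X(JY)-J(D_XY)=\phi(X)(AJ-JA)Y=0$, because $A$ is $g$-skew and $J$-linear, respectively; hence $Dg=DJ=0$.

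The only place the hypothesis enters is the existence of $\phi$ vanishing on $\g^1$, which is exactly what forces $D_{[X,Y]}=0$ and thus the flatness of $D$; for a perfect $\g$ this device is unavailable. So there is no genuine obstacle here — the argument is a short computation — and the real content of the statement is that, when $\g$ is not perfect, $A$ may be taken non-zero, producing a non-trivial Hermitian flat connection and hence a tangent Lie algebra $T_D\,\g$ that is a genuine extension rather than the direct sum $\g\oplus\R^{2n}$ obtained from $D=0$.
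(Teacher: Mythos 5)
Your construction is essentially the paper's: the paper takes $\phi=e^{2n}$ (the dual of a basis vector chosen outside $\g^1$) and $A=J$, i.e.\ $D_{e_{2n}}=J$ and $D_{e_i}=0$ otherwise, which is the special case of your $D_X=\phi(X)A$. Your phrasing is a mild generalization and has the merit of making explicit why flatness needs $\phi|_{\g^1}=0$ (so that $D_{[X,Y]}=0$), a point the paper leaves as ``easy to verify.''
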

\begin{proof}
Let $X\in\g\setminus\g^1$, and choose a basis $\{e_i\}$ of $\g$ such that $e_{2n}=X$. We define 
\[
\begin{cases}
D_{e_i}Y=0 \quad i=1,\dots,2n-1 \\
D_{e_{2n}}Y=JY.
\end{cases}
\]
It is easy to verify that $D$ is flat. Moreover, it {satisfies} the conditions
\[
g(D_XY,Z)=-g(Y,D_XZ) \qquad\ J(D_XY)=D_X(JY)
\]
since $g$ is Hermitian.
\end{proof}

\begin{rem}
Note that   the strict inclusion $[\g,\g]\subsetneq\g$  holds for every solvable Lie algebra $\g$.  {Therefore, by applying  the previous proposition,} we have that  every SKT solvable Lie algebra admits an SKT tangent Lie algebra.
\end{rem}

{Next we will}  apply the previous construction to   $4$-dimensional  SKT Lie algebras.  We recall that in the solvable case a  $4$-dimensional SKT   Lie group is unimodular if and only if it admits a compact quotient by a discrete subgroup (see \cite{MS}). 

By \cite{Ha3} a complex (non-K\"ahler) surface diffeomorphic to a
$4$-dimensional compact homogeneous manifold $X =\Theta \backslash L$, where $\Theta$ is a uniform
discrete subgroup of $L$,
and which does
not admit any K\" ahler structure is one of the following:\vskip.1truecm\noindent
a) Hopf surface;\vskip.1truecm\noindent
 b) Inoue surface of type ${\mathcal S}^0$;\vskip.1truecm\noindent
c) Inoue surface of type ${\mathcal S}^{\pm}$;\vskip.1truecm\noindent
d) primary Kodaira surface;\vskip.1truecm\noindent
e) secondary Kodaira surface;\vskip.1truecm\noindent
f) properly elliptic surface with first odd Betti number.\smallskip

All the previous complex (non-K\"ahler) surfaces admit  an invariant SKT structure (i.e.,  induced by a  SKT  structure on the Lie algebra of $L$)
and by \cite{AG, FT} an Inoue surface of type ${\mathcal S}^0$ {admits} an invariant generalized K\"ahler structure.  A $\T^2$-bundle over the Inoue surface of type ${\mathcal S}^0$ was considered in \cite{FT}
in order to construct a $6$-dimensional compact solvmanifold with a non-trivial
generalized K\"ahler structure. A similar construction
can be
done for any of the non-K\"ahler complex homogeneous surfaces, {using} the description of $L$ and $\Theta$ in
\cite{Ha3}. Indeed, in \cite{FT2} it was proved that on any non-K\"ahler compact homogeneous complex surface $X= \Theta \backslash L$
there exists a non-trivial compact $\T^2$-bundle
$M$ carrying a locally conformally balanced SKT metric.

In the sequel we will use tangent Lie algebras associated to $4$-dimensional solvmanifolds to produce examples of SKT metrics and  generalized K\"ahler  structures in higher dimensions.

\begin{ex}
Consider the $4$-dimensional solvable Lie algebra $\g_1$ with structure equations
{\[
\begin{cases}
de^1= e^2 \wedge e^4 \\
de^2=-e^1 \wedge e^4 \\
de^3=e^1 \wedge e^2 \\
de^4=0.
\end{cases}
\]}
{To simplify the notations, we will denote  $\g_1$ by $(e^{24},-e^{14},e^{12},0)$.} On $\g_1$ we define the integrable complex structure $Je^{2i-1}=e^{2i}$ for $i=1,2$ and the SKT inner product $g = \sum_{j = 1}^4 e^j \otimes e^j$. By \cite{Ha3}, $(\g_1,J)$ is the Lie algebra corresponding to the \emph{secondary Kodaira surface}.\\
Since $e_4\notin [\g_1,\g_1]$, we can consider the flat connection $D$ defined in the proof of Proposition \ref{connection}. 
{The tangent Lie algebra $T_D\,\g_1$} has structure equations
\[
(
f^{24},
-f^{14},
f^{12},
0,
-f^{46},
f^{45},
-f^{48},
f^{47}
).
\]
Combining Propositions \ref{connection} and \ref{properties} the induced Hermitian structure $(\tilde J,\tilde g)$ is SKT. 
Moreover, it is easy to verify that the Lie algebra ${\tilde \g}_1 = T_D\,\g_1$ is 3-step solvable and unimodular. The  simply connected Lie group  $\tilde G_1$ with Lie algebra $\tilde {\mathfrak g}_1$ is isomorphic to the semidirect product $\R \ltimes_{\mu} (H_3 \times \C^2)$ where $H_3$ is the real $3$-dimensional Heisenberg Lie group and $\mu$ is the automorphism
$$
\mu(t): (x + i y, u, z_1, z_2) \rightarrow (e^{i\frac{\pi}{2}t} (x + i y), u, e^{i\frac{\pi}{2}t} z_1, e^{i\frac{\pi}{2}t} z_2)
$$
by identifying the matrix
$$
\left ( \begin{array}{ccc} 1&x&u\\ 0&1&y\\ 0&0&1 \end{array} \right )
$$
in $H_3$ with $(x + i y, u) \in \C \ltimes \R$.
Arguing as  in \cite{FT3} it is possible to show that ${\tilde G}_1$ admits a uniform discrete subgroup.

More in general,  {a flat Hermitian connection  $\hat D$} is  on $({\mathfrak g}_1, J, g)$ then it is given with respect to the basis $\{e_i\}$ by 
\begin{equation} \label{Dgenkodsec}
\hat D_{e_i} =0,  \, i = 1,2,3, \quad \hat D_{e_4} = \left (  \begin{array}{cccc}
0&a_{1,2}&a_{1,3}&a_{1,4} \\ -a_{12} & 0 & - a_{1,4} &a_{1,3}\\ - a_{13} & a_{1,4}&0&a_{3,4}\\ - a_{14} & - a_{1,3} & - a_{3,4} & 0   \end{array} \right ),  \, a_{ij} \in \R
\end{equation}
The connection $D$ considered before {coincides} with $\hat D$ when $a_{1,2}=a_{3,4}=1,\ a_{1,3}=a_{1,4}=0$. Note that different choices of the coefficients can lead to {non-isomorphic} Lie algebras. Indeed, when $a_{1,2}=a_{1,3}=a_{1,4}=0$ we obtain that $T_{\hat D}\,\g_1 \cong\R^2\times\mathfrak h$ for a 6-dimensional Lie algebra $\mathfrak h$, so $T_{\hat D}\,\g_1 \ncong \tilde\g_1$.

\end{ex}

\begin{ex}
We start {by} considering the $4$-dimensional solvable Lie algebra 
\[
\g_2= (a e^{14}+b e^{24}, - b e^{14} + a e^{24}, 2a e^{34}, 0),  \quad {a,b \in \R - \{ 0 \},}
\]
{endowed with the two integrable complex structures $J_\pm$ defined by 
\[
J_\pm e^{1}=e^{2} \qquad J_\pm e^3=\pm e^4
\]
and the inner product $g= \sum_{j = 1}^4 e^j \otimes e^j$. By \cite{Ha3}, $(\g_2,J_+)$ corresponds to the \emph{Inoue surface of type  ${\mathcal S}^0$}.} Defining $\omega_\pm(\cdot,\cdot) = g(J_\pm\cdot,\cdot)$ we obtain $d^c_+\omega_+=-d^c_-\omega_-=2ae^{123}$ and $dd^c_+\omega_+=dd^c_-\omega_-=0$, so $(J_\pm,g)$ is a generalized K\"ahler  structure on $\g_2$. We note that $e_4\notin [\g_2,\g_2]$, so applying Proposition \ref{connection} the connection $D$ defined by
\[
\begin{cases}
D_{e_i}=0\quad i=1,2,3 \\
D_{e_4}=J_+
\end{cases}
\]
is flat and {satisfies} $DJ_+=Dg=0$. 
Thus, the induced Hermitian structure $(\tilde J_+,\tilde g)$ on {the} Lie algebra $T_D\,\g_2$ with structure equations 
\[
(a\,f^{14}+b\,f^{24},
-b\,f^{14}+a\,f^{24},
-2a\,e^{34},
0,
-f^{46},
f^{45},
-f^{48},
f^{47}
)
\]
is SKT. Moreover, since $J_+$ and $J_-$ {commute} $DJ_-=0$, hence by Corollary \ref{cor} $(\tilde J_\pm,\tilde g)$ is a generalized {K\"ahler} structure on $T_D\,\g_2$.

Again, $T_D\,\g_2$ is 2-step solvable and unimodular. This generalized K\"ahler Lie algebra was already introduced in \cite{FT} and it was shown that the corresponding simply connected Lie group admits a compact quotient by a discrete subgroup.

More in general, {a flat Hermitian connection  $\hat D$  is expressed   by \eqref{Dgenkodsec}  with respect to the basis $\{e_i\}$}. Moreover, for every choice of the coefficients $a_{1,2},a_{1,3},a_{1,4},a_{3,4}$ we find that $\hat D$ and $J_-$ commute, i.e. $\hat DJ_-=0$. So $(T_{\hat D}\,\g_2,\tilde J_\pm,\tilde g)$ is a GK Lie algebra for every $\hat D$.

\end{ex}

\begin{ex} Consider  the $4$-dimensional nilpotent  Lie algebra
\[
\mathfrak g_3=(0,0, e^{12},0)
\]
endowed with the integrable complex structure $J$ defined by $Je^{2i-1}=e^{2i}$ for $i=1,2$ and the SKT inner product $g = \sum_{j = 1}^4 e^j \otimes e^j$ {corresponding} to the \emph{primary Kodaira surface}. This is a 2-step nilpotent Lie algebra. Indeed $[\g_3,\g_3]=  {{\mbox {span}} <e_3>}$,  {so, in order to generate the action induced by a Hermitian flat connection $D$ on $\R^4$, we need not only $e_4$ (as in the previous example) but also $e_1$ and $e_2$.}
In fact, every connection in the form
\[
D_{e_3} =\mathbf{0}, \qquad D_{e_i} = \left (   \begin{array}{cccc}
0&a_{i,1}&a_{i,2}&a_{i,3} \\ -a_{i,1} & 0 & - a_{i,3} &a_{i,2}\\ - a_{i,2} & a_{i,3}&0&a_{i,4}\\ - a_{i,3} & - a_{i,2} & - a_{i,4} & 0   \end{array} \right )\ \ i=1,2,4 
\]
with $a_{i,j}\in\R$ satisfying the conditions 
\[
\begin{array}{cc}
a_{2,2}(a_{1,1}-a_{1,4})=a_{1,2}(a_{2,1}-a_{2,4})\phantom{\frac{1}{2}}  &\ a_{1,3}\,a_{2,2}=a_{1,2}\,a_{2,3}  \\
a_{3,3}\,a_{2,2}=a_{3,2}\,a_{2,3}\phantom{\frac{1}{2}}  &\  a_{2,2}(a_{3,1}-a_{3,4})=a_{3,2}(a_{2,1}-a_{2,4})
\end{array}
\]
is flat and $Dg=DJ=0$. The tangent Lie algebra $T_D\,\g_3$ is 2-step solvable and unimodular as in the previous cases, but in general it is not nilpotent.

\end{ex}

\section{Taming symplectic forms on $4$-dimensional Lie groups}

We recall that on a complex manifold $(M,J)$ a \emph{taming symplectic form} is a symplectic form $\Omega$ on $M$  such that $\Omega(X,JX)>0$ for every  non-zero vector field $X$ of $M$. {This is equivalent to the existence of an SKT metric $\omega$ such that $\de\omega=\debar\beta$ for a $\de$-closed (2,0)-form $\beta$ (\cite{EFV})}. If $M$ is compact and $(M,J)$ admits a K\"ahler metric, the converse is also true: 
\begin{prop}
Let $(M,J)$ be a compact complex manifold that admits a K\"ahler metric. Then every SKT metric induces a taming symplectic form.
\end{prop}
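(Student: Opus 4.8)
The plan is to show directly that an arbitrary SKT metric on $(M,J)$ satisfies the criterion recalled just above the statement, i.e. that there is a $\partial$-closed $(2,0)$-form $\beta$ with $\partial\omega=\overline{\partial}\beta$. By the cited result from \cite{EFV} this already yields a taming symplectic form, and concretely $\Omega:=\omega-\beta-\overline{\beta}$ will do the job; note that $\Omega$ agrees with $g$ on pairs $(X,JX)$, which is the sense in which $g$ "induces" it.

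First I would set $\gamma:=\partial\omega$, a $(2,1)$-form, and record three facts: $\gamma$ is $\partial$-closed (since $\partial^2=0$), it is $\overline{\partial}$-closed because $\overline{\partial}\gamma=\overline{\partial}\partial\omega=-\partial\overline{\partial}\omega=0$ by the SKT hypothesis, and it is $\partial$-exact by its very definition. The key point is that $(M,J)$, being a compact complex manifold admitting a K\"ahler metric, satisfies the $\partial\overline{\partial}$-lemma; this is a property of the complex manifold, so it is available even though the SKT metric $g$ we start from need not be the K\"ahler one. Applying the $\partial\overline{\partial}$-lemma to $\gamma$ — a pure-type form which is $\partial$-closed, $\overline{\partial}$-closed and $\partial$-exact — gives $\gamma=\partial\overline{\partial}\psi$ for some form $\psi$, and comparing bidegrees forces $\psi$ to be of type $(1,0)$. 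Then $\beta:=-\partial\psi$ is a $(2,0)$-form with $\partial\beta=-\partial^2\psi=0$ and $\overline{\partial}\beta=-\overline{\partial}\partial\psi=\partial\overline{\partial}\psi=\gamma=\partial\omega$, which is exactly the required criterion. To see concretely that this produces a taming form one checks that $\Omega=\omega-\beta-\overline{\beta}$ is real (as $\omega$ is real), closed (using $\partial\beta=0$, $\overline{\partial}\overline{\beta}=0$, $\overline{\partial}\beta=\partial\omega$ together with the conjugate identity $\partial\overline{\beta}=\overline{\partial}\omega$), and tames $J$: since $\beta$ has type $(2,0)$ and $\overline{\beta}$ type $(0,2)$, a standard computation gives $(\beta+\overline{\beta})(X,JX)=0$ for every vector $X$, so $\Omega(X,JX)=\omega(X,JX)=g(X,X)>0$ for $X\neq 0$. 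In particular $\Omega$ is nondegenerate, and being also closed it is a symplectic form taming $J$.

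The only nontrivial ingredient is the $\partial\overline{\partial}$-lemma, and the subtlety to keep in mind is precisely that it must be invoked for the given K\"ahler structure on $(M,J)$ rather than for $\omega$ itself; once that is in place the rest is bookkeeping of bidegrees, the main such check being that the potential $\psi$ furnished by the lemma has type $(1,0)$, which is what makes $\beta=-\partial\psi$ a $(2,0)$-form. Everything else — that $\Omega$ is real, closed and tames $J$ — is elementary, or can be bypassed entirely by quoting the equivalence from \cite{EFV}.
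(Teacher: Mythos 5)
Your argument is correct and follows essentially the same route as the paper: both apply the $\partial\overline{\partial}$-lemma (available because $(M,J)$ is compact and K\"ahler) to the $\partial$-exact, $\overline{\partial}$-closed form $\partial\omega$ to produce a $\partial$-closed $(2,0)$-form $\beta$ with $\partial\omega=\overline{\partial}\beta$, and then invoke the equivalence from \cite{EFV}. Your explicit verification that $\Omega=\omega-\beta-\overline{\beta}$ is real, closed and tames $J$ is a welcome extra detail that the paper leaves to the cited reference.
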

\begin{proof}
Since $(M,J)$ is compact and admits a K\"ahler metric, the $\de\debar$-lemma holds. Let $\omega$ be the fundamental 2-form of an SKT metric, i.e. $\de\debar\omega=0$. Applying the $\de\debar$-lemma to $\de\omega$ we obtain $\de\omega=\de\debar\gamma$ for some (1,0)-form $\gamma$ on $M$. Then $\de\omega=\debar(-\de\gamma)$ with $\de(-\de\gamma)=0$, so $\omega$ induces a taming symplectic form.
\end{proof}

If $(M,J)$ is compact and 4-dimensional, then it admits a taming symplectic form if and only if it admits a K\"ahler metric \cite[Theorem 1.5]{LiZhang}, so every SKT metric on a compact 4-dimensional {K\"ahler} manifold induces a symplectic form that tames the complex structure. {One can wonder if it also holds for non-compact manifolds. We verify} that is still true in the case of invariant Hermitian structures on $4$-dimensional simply connected Lie groups.

\begin{prop}
Let $(\g,J)$  be a 4-dimensional Lie algebra endowed with a complex structure. Then:
\begin{enumerate}
\item $\g$ admits a taming symplectic form if and only if it admits a K\"ahler metric;
\item if $\g$ admits a K\"ahler metric, every SKT metric induces a taming symplectic form.
\end{enumerate}
\end{prop}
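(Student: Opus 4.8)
The plan is to prove the two implications in (1) separately and to extract (2) along the way. One implication is immediate: if $(\g,J)$ carries a K\"ahler metric $g$ with fundamental form $\omega$, then $\omega$ is closed and nondegenerate, hence symplectic, and $\omega(X,JX)=g(X,X)>0$ for every $X\neq 0$, so $\omega$ tames $J$. This gives the ``only if'' part of (1), and reading the same computation for an arbitrary SKT metric is what suggests (2).

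For the converse I would start from a symplectic form $\Omega$ taming $J$ and decompose $\Omega=\Omega^{2,0}+\Omega^{1,1}+\Omega^{0,2}$ with $\Omega^{0,2}=\overline{\Omega^{2,0}}$. Since a real $2$-form of type $(2,0)+(0,2)$ vanishes on every pair $(X,JX)$, the taming condition is equivalent to $\Omega^{1,1}$ being a positive $(1,1)$-form, i.e.\ the fundamental form $\omega$ of a $J$-Hermitian metric. Decomposing $d=\de+\debar$ and using $\dim_\C\g=2$ (so that $(3,0)$- and $(0,3)$-forms vanish), the equation $d\Omega=0$ reduces to the single relation $\de\omega=\debar\beta$ with $\beta=-\Omega^{2,0}$ and $\de\beta=0$ automatic; in particular $\de\debar\omega=0$. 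Hence $\g$ is a $4$-dimensional SKT Lie algebra, and it remains to show that a $4$-dimensional SKT Lie algebra carrying \emph{in addition} a taming symplectic form is K\"ahler, and that on the algebras so obtained every SKT metric satisfies the criterion of \cite{EFV}.

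Here I would run through the $4$-dimensional Lie algebras admitting a complex structure. If $\g$ is not solvable then, by the Levi decomposition, it is $\mathfrak{u}(2)$ or $\mathfrak{sl}(2,\R)\oplus\R$; both are unimodular with $H^2(\g,\R)=0$ (Whitehead's lemma together with the K\"unneth formula), hence admit no symplectic form at all, so this case cannot occur. If $\g$ is nilpotent then it is $\R^4$ or $\R\oplus\mathfrak{h}_3$ (the primary Kodaira algebra $(0,0,e^{12},0)$); the latter is the Lie algebra of a nilmanifold that is not a torus, which by \cite{EFV} admits no taming symplectic form, so $\g=\R^4$, which is K\"ahler. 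If $\g$ is solvable, non-nilpotent and unimodular, it admits a lattice by \cite{MS}, and the resulting compact solvmanifold is a complex surface carrying a taming symplectic form, hence K\"ahler by \cite[Theorem~1.5]{LiZhang}; comparing with the list a)--f) of \cite{Ha3}, the underlying surface must then be, by \cite{Ha2}, a complex torus or a hyperelliptic surface, which forces $\g\cong\R^4$ or $\g\cong\R\oplus\mathfrak{e}(2)=(-e^{24},e^{14},0,0)$, and in the latter case $e^{12}+e^{34}$ is a closed fundamental form, so $\g$ is K\"ahler. Finally there remain the finitely many solvable \emph{non-unimodular} $4$-dimensional Lie algebras admitting a complex structure (for instance $\mathfrak{aff}(\R)\oplus\mathfrak{aff}(\R)$, $\mathfrak{aff}(\R)\oplus\R^2$, $\mathfrak{aff}(\C)$): for each of these I would use the explicit structure equations to parametrize the two-dimensional space of $(2,0)$-forms $\beta$ and the general positive $(1,1)$-form $\omega$, and decide by a short linear computation whether $\de\omega=\debar\beta$ has a solution with $\omega$ positive definite — the answer being affirmative precisely for the K\"ahler ones. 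Part (2) is then obtained for each of the (finitely many) K\"ahler algebras on the resulting list by the same kind of direct computation: one checks that for \emph{every} SKT metric $\omega$ the $(2,1)$-form $\de\omega$ equals $\debar\beta$ for some $\de$-closed $(2,0)$-form $\beta$, so by \cite{EFV} $\omega$ induces a taming symplectic form.

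The main obstacle is exactly this explicit analysis on the non-unimodular solvable Lie algebras: there is no compact quotient, so \cite{LiZhang} is unavailable, and one must verify by hand both the nonexistence of a taming form in the non-K\"ahler cases and, in the K\"ahler cases, that every SKT metric satisfies the criterion of \cite{EFV}. Everything else reduces to structural facts about $4$-dimensional Lie algebras that are already in the literature.
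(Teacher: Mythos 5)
Your outline follows essentially the same route as the paper: K\"ahler $\Rightarrow$ taming is immediate; taming $\Rightarrow$ SKT with $\de\omega=\debar\beta$, $\de\beta=0$ via the \cite{EFV} equivalence; non-solvable algebras are excluded by unimodularity and \cite{LM}; unimodular solvable algebras are reduced to the compact case and \cite{LiZhang}; and the remaining non-unimodular solvable algebras are to be settled by explicit computation against the classification of \cite{MS}. The one genuinely different touch is your separate treatment of the nilpotent case via the nilmanifold theorem of \cite{EFV} and of the unimodular non-nilpotent case via Hasegawa's list, which the paper compresses into the single remark that unimodular SKT solvable algebras have compact quotients; your version is more explicit but carries the same small loose ends as the paper's (one still needs an \emph{invariant} K\"ahler form compatible with the \emph{given} $J$ on $\R\oplus\mathfrak{e}(2)$, and statement (2) in the unimodular case is not really addressed by either argument).

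The substantive difference is that the entire body of the paper's proof is precisely the computation you defer as ``a short linear computation'': for each of the six non-unimodular families of \cite{MS} ($\R\times\mathfrak r_{3,0}$, $\aff\times\aff$, $\mathfrak r'_{4,\lambda,0}$, $\mathfrak d_{4,2}$, $\mathfrak d'_{4,\lambda}$, $\mathfrak d_{4,\frac12}$) the authors write down the general SKT fundamental form $\omega$, an explicit $a\in\C$ with $\de\omega=\debar(a\,\alpha^{12})$, and an explicit closed positive $(1,1)$-form $\omega_k$. Until this is carried out your argument proves neither direction of (1) nor (2) in the non-unimodular case, and your phrasing ``affirmative precisely for the K\"ahler ones'' anticipates a dichotomy that in fact does not occur: the computation shows that \emph{every} non-unimodular $4$-dimensional SKT Lie algebra is K\"ahler, so the taming form exists whenever an SKT metric does. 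The computations are routine and do go through, so this is a matter of unexecuted (but correctly identified and essential) work rather than a wrong step.
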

\begin{proof}
It is well known that a non-solvable Lie algebra of dimension 4 is unimodular, so by \cite{LM} it does not admit any symplectic structure. Moreover, an SKT solvable Lie algebra of dimension 4 admits a compact quotient if and only if it is unimodular. Therefore, to study taming symplectic forms on non-compact Lie algebras it is sufficient to consider the non-unimodular case. Using the classification of SKT structures on 4-dimensional Lie algebras in \cite{MS}, for every non-unimodular Lie algebra $\g$ that admits an SKT structure $(J,\omega)$ we provide a $\de$-closed (2,0)-form $\beta$ such that $\de\omega=\debar\beta$ and a $J$-compatible K\"ahler metric $\omega_k$.

Before starting, we note that since $\g$ is 4-dimensional, the space of (2,0)-forms is generated by $\alpha^{12}$, where $\{ \alpha^1,\alpha^2 \}$ is a basis for (1,0)-forms. Thus $\beta$ is $\de$-closed and is in the form $a\,\alpha^{12}$, where $a\in\C$.

Following the notations of \cite{MS}, we study the non-unimodular SKT 4-dimensional Lie algebras:
\begin{itemize}

\item $\R\times\mathfrak r_{3,0} = (0,e^{21},0,0)$. Every SKT metric {has} the form $\omega=e^{12}+e^{34}$ with $Je^1=e^2,\ Je^3=e^4$ and structure equations
\[
(0,0,0,u_1e^{12}+\sqrt{u_1w_1}(e^{14}-e^{23})+w_1e^{34})
\]
where the coefficients are real and $w_1>0,\ u_1\geqslant 0$. We find that $\de\omega=\debar (a\,\alpha^{12})$ with $a=i\,\frac{\sqrt{u_1w_1}}{2w_1}$.

Moreover, the (1,1)-form
\[
\omega_k=ne^{12}+me^{34}+m\,\frac{\sqrt{u_1w_1}}{w_1}(e^{14}-e^{23})
\]
with the conditions $n>m\,\frac{u_1}{w_1},\ m>0$ is closed and positive, so it is a K\"ahler metric with respect to $J$.

\

\item $\mathfrak{aff}_\R\times\mathfrak{aff}_\R = (0,e^{21},0,e^{43})$. Every SKT metric {has} the form $\omega=e^{12}+e^{34}+t(e^{13}+e^{24})$ with $Je^1=e^2,\ Je^3=e^4$ and structure equations
\[
(0,0,x_1e^{12}+x_3(e^{14}-e^{23})+y_2e^{34},u_1e^{12}+u_3(e^{14}-e^{23})+v_2e^{34}),
\]
where $de^2$ and $de^4$ are linearly indipendent and the real coefficients satisfy
\[
\begin{array}{lll}
y_{{2}}x_{{1}}-y_{{2}}u_{{3}}+v_{{2}}x_{{3}}-{x_{{3}}}^{2}=0 &&
u_{{1}}v_{{2}}-u_{{1}}x_{{3}}+u_{{3}}x_{{1}}-{u_{{3}}}^{2}=0 \\
u_{{3}}x_{{3}}-y_{{2}}u_{{1}}=0 && 
(u_1-x_3)(v_2+x_3)-(u_3+x_1)(u_3-y_2)=0.
\end{array}
\]
We find that $\de\omega=\debar (a\,\alpha^{12})$ with $a=-\frac{t}{2}+i\,\frac{u_3-y_2}{2(x_3+v_2)}$. More in general, we have that every $d$-closed 3-form is exact because $b_3^{\,\text{inv}}=\dim H^3(\g)=0$.

Moreover, the (1,1)-form
\[
\omega_k=ne^{12}+me^{34}+p(e^{14}-e^{23})
\]
with the conditions $nx_3-mu_1+p(u_3-x_1)=0,\ nm>p^2$ and $m>0$ is closed and positive, so it is a K\"ahler metric with respect to $J$. Note that the conditions above admit a solution for every {choice} of the coefficients $x_1,x_3,u_1,u_3$. Indeed, fixing $p$, the first condition can be written as $nx_3 = mu_1-p(u_3-x_1)$, so we can choose $n$ and $m$ as large as we need in order to satisfy $nm>p^2$.

\

\item $\mathfrak r'_{4,\lambda,0} = (0,\lambda\, e^{21},e^{41},-e^{31})$, with $\lambda>0$. Every SKT metric {has} the form $\omega=e^{12}+e^{34}$ with $Je^1=e^2,\ Je^3=e^4$ and structure equations
\[
(0,x_1e^{12},y_1e^{12}+y_3e^{14},-y_3e^{13})
\]
where the coefficients are real, $y_3\neq0,\ x_1>0$ and $y_1\geqslant 0$. {Under}  these conditions $\lambda=\vert{\frac{x_1}{y_3}}\vert$. We find that $\de\omega=\debar (a\,\alpha^{12})$ with $a=-\frac{y_1}{2(x_1^2+y_3^2)} (x_1+iy_3)$.

Moreover, the (1,1)-form
\[
\omega_k=ne^{12}+me^{34}+m\,\frac{y_1y_3}{x_1^2+y_3^2}(e^{14}-e^{23})-m\,\frac{y_1x_1}{x_1^2+y_3^2}(e^{13}+e^{24})
\]
with the conditions $n>m\,\frac{y_1}{x_1^2+y_3^2}(y_3-x_1),\ n,m>0$ is closed and positive, so it is a K\"ahler metric with respect to $J$.

\

\item $\mathfrak d_{4,2} = (0,2\,e^{21},-e^{31},e^{41}+e^{32})$. Every SKT metric {has}  the form $\omega=e^{12}+e^{34}$ with $Je^1=e^2,\ Je^3=e^4$ and structure equations
\[
(0,x_1e^{12},y_1e^{12}-\frac{1}{2}x_1e^{13},u_1e^{12}+\frac{1}{2}x_1e^{14}-x_1e^{23})
\]
where the coefficients are real and $x_1>0$. We find that $\de\omega=\debar (a\,\alpha^{12})$ with $a=\frac{1}{3x_1} (-y_1+iu_1)$.

Moreover, the (1,1)-form
\[
\omega_k=ne^{12}+me^{34}+m\,\frac{2u_1}{3x_1}(e^{14}-e^{23})
\]
with the conditions $n>m\,\frac{4u_1^2}{9x_1^2},\ m>0$ is closed and positive, so it is a K\"ahler metric with respect to $J$.

\

\item $\mathfrak d'_{4,\lambda} = (0,\lambda\, e^{21}+e^{31},-e^{21}+\lambda\, e^{31},2\lambda\, e^{41}+e^{32})$, with $\lambda>0$. Every SKT metric {has} the form $\omega=e^{12}+e^{34}+t(e^{13}+e^{24})$ with $Je^1=e^2,\ Je^3=e^4$ and structure equations
\[
\left\{
\begin{aligned}
&d {e}^1 = 0 \\
&d {e}^2 = -k(1+q^2)\, {e}^{12}-kqr( {e}^{14}- {e}^{23})-kr^2\, {e}^{34} \\
&d {e}^3 = \frac{z_3q}{r}\, {e}^{12}-\frac{k}{2}\, {e}^{13}+z_3\, {e}^{14} \\
&d {e}^4 = \frac{q}{r}(kq^2+\frac{k}{2})\, {e}^{12}-z_3\, {e}^{13}+(kq^2-\frac{k}{2}) {e}^{14}-kq^2 {e}^{23}+kqr\, {e}^{34},
\end{aligned}
\right.
\]
with $q,r,k\in\R$ such that $q^2+r^2=1,\ r>0$ and $k,z_3\neq0$. {Under} these conditions $\lambda=\vert{\frac{k}{2z_3}}\vert$. We find that $\de\omega=\debar (a\,\alpha^{12})$ with $a=-\frac{t}{2}-i\frac{q}{2r}$. More in general, we have that every $d$-closed 3-form is exact because $b_3^{\,\text{inv}}=\dim H^3(\g)=0$.

Moreover, the (1,1)-form
\[
\omega_k=m\,\frac{1+q^2}{r^2}e^{12}+me^{34}+m\,\frac{q}{r}(e^{14}-e^{23})
\]
with the condition $m>0$ is closed and positive, so it is a K\"ahler metric with respect to $J$.

\

\item $\mathfrak d_{4,\frac{1}{2}} = (0,\frac{1}{2}\,e^{21},\frac{1}{2}\,e^{31},e^{41}+e^{32})$. Every SKT metric {has}  the form $\omega=e^{12}+e^{34}+t(e^{13}+e^{24})$ with $Je^1=e^2,\ Je^3=e^4$ with the structure equations consider in the latter case with the additional condition $z_3=0$. {Like} before, we find that $\de\omega=\debar (a\,\alpha^{12})$ with $a=-\frac{t}{2}-i\frac{q}{2r}$ and that the (1,1)-form
\[
\omega_k=m\,\frac{1+q^2}{r^2}e^{12}+me^{34}+m\,\frac{q}{r}(e^{14}-e^{23})
\]
with the condition $m>0$ is a K\"ahler metric with respect to $J$.

\end{itemize}
\end{proof}


\begin{thebibliography}{12}

\bibitem{AG}  V. Apostolov and M. Gualtieri, Generalized K\"ahler manifolds with split tangent bundle,
{\em Comm. Math. Phys.} \textbf{271} (2007), 561--575.

\bibitem{BD} M.L. Barberis, I. Dotti, Complex structures on
affine motion groups,  {\em Q. J. Math.} {\bf 55} (2004), 375--389.
  
  \bibitem{BDV} M. L. Barberis, I. Dotti, M. Verbitsky, Canonical bundles of complex nilmanifolds, with applications to hypercomplex geometry, \emph{Math. Res. Lett.} {\bf 16} (2009), 331--347.

  
\bibitem{BF} M.L. Barberis, A. Fino, New strong HKT manifolds arising from quaternionic representations,  Preprint math.DG/0805.2335, to appear in \emph{Math. Z.}.

\bibitem{Bismut} J.M. Bismut, A local index theorem for non-K\" ahler manifolds, {\em Math. Ann.} {\bf 284} (1989), 681--699.

\bibitem{CG} G. R. Cavalcanti, M. Gualtieri, Generalized complex structures on nilmanifolds,
{\em J. Symplectic Geom.} {\bf 2} (2004), 393--410.
  
  \bibitem{Ca}  G.R. Cavalcanti, Formality in generalized K\"ahler geometry, {\em Topology Appl.} \textbf{154} (2007), 119--1125.

  \bibitem{donaldson}
S.K. Donaldson, Two-forms on four-manifolds and elliptic equations,
in \emph{Inspired by S.S. Chern}, World Scientific, 2006.

\bibitem{DF} I. Dotti, A. Fino,  Hyper-K\"ahler torsion
structures invariant by nilpotent Lie groups, {\em Classical Quantum Gravity}
  {\bf 19} (2002), 1-12.
  
  \bibitem{En} N. Enrietti, Static SKT metrics on Lie groups, Preprint {\tt arXiv:1009.0620v1}.
  
\bibitem{EFV} N. Enrietti, A. Fino, L. Vezzoni, {Tamed symplectic forms and SKT metrics, Preprint {\tt arXiv:1009.0620}, to appear in  {\em J. Symplectic Geom.}}

\bibitem{FFUV} M. Fernandez, A. Fino, L. Ugarte, R. Villacampa, Strong K\"ahler with torsion structures from almost contact manifolds, Preprint {\tt 	arXiv:0909.3946v2}, to appear in {\em Pacific J. Math.}

\bibitem{FG} A. Fino, G. Grantcharov, On some properties of the manifolds with skew-symmetric
torsion and holonomy $SU (n)$ and $Sp(n)$,  {\em Advances in Math.} {\bf 189} (2004), 439--450.

\bibitem{FPS} A. Fino, M. Parton, S. Salamon, Families of strong KT structures in six dimensions, {\em Comm. Math. Helv.}  {\bf 79} (2004) no. 2,  317--340.

\bibitem{FT} A. Fino, A. Tomassini, Non K\"ahler solvmanifolds with generalized K\"ahler structure,
{\em J. Symplectic Geom.} {\bf 7} (2009),  1--14. 
\bibitem{FT2} A. Fino, A. Tomassini, Blow-ups and resolutions of strong K\"ahler with torsion
metrics,  {\em Adv. Math.} {\bf  221} (2009),  914--935.
\bibitem{FT3} A. Fino, A. Tomassini, On astheno-K\"ahler metrics,  Preprint {\tt math.DG/0806.0735}, to appear in \emph{J. London Math. Soc}.

\bibitem{GHR} S.J. Gates, C.M. Hull, M. Ro\v cek, Twisted multiplets and new supersymmetric non-linear
$\sigma$-models, \emph{Nuclear Phys. B}  {\bf 248} (1984), 157--186.

\bibitem{Gau}   P. Gauduchon, Hermitian connections and Dirac operators, {\em Boll. Un. Mat. Ital. B}  {\bf 11} (1997), 257--288.

\bibitem{Gau2}  P. Gauduchon, La 1-forme de torsione d'une variete hermitienne compacte, {\em Math. Ann. } {\bf 267}  (1984), 495--518.

\bibitem{GP} G. Grantcharov, Y.S. Poon, Geometry of hyper-K\" ahler connections with torsion, \emph{Comm. Math. Phys.} {\bf 213} (2000),  19--37.


\bibitem{Gu} M. Gualtieri, Generalized complex geometry, DPhil thesis, University of Oxford, 2003,
{\tt arXiv:math.DG/0401221}.

\bibitem{Ha2}  K. Hasegawa, A note on compact
    solvmanifolds with K\" ahler structures,  {\em Osaka J. Math.},
 {\bf 43} (2006), 131--135.


\bibitem{Ha3} K. Hasegawa, Complex and K\"ahler structures on compact solvmanifolds, {\em J. Symplectic Geom.}
{\bf 3} (2005), 749--767.

\bibitem{Hi2} N.J. Hitchin, Instantons and generalized K\"ahler geometry, {\em Comm. Math. Phys.}
\textbf{265} (2006), 131--164.

\bibitem{HP} P. S.  Howe,  G. Papadopoulos,  Twistor spaces for hyper-K\" ahler manifolds with torsion, \emph{Phys. Lett. B} {\bf 379} (1996),  80--86.


\bibitem{In} M. Inoue, On surfaces of class $VII_0$,  {\em Invent. Math.} \textbf {24} (1974), 269-310.


\bibitem{LiZhang} T.-J. Li, W. Zhang, Comparing tamed and compatible symplectic cones and
cohomological properties of almost complex manifolds, {\em Comm. Anal. Geom.} {\bf 17} (2009), no. 4, 651--683.

\bibitem{LM}  A. Lichnerowicz,  A. Medina,  On {L}ie groups with left-invariant symplectic or
              {K}\"ahlerian structures, {\em Lett. Math. Phys.}, {\bf 16} (1988), no. 3, 225--235.

\bibitem{MS} T. B. Madsen,  A. Swann, Invariant strong KT geometry on four-dimensional solvable Lie groups, {{\em J. Lie Theory} {\bf 21} (2011), no. 1, 055--070.}

\bibitem{RT} F. Rossi, A. Tomassini, On  strong K\" ahler  and  astheno-K\"ahler  metrics  on nilmanifolds, Preprint.


\bibitem{ST} J. Streets,  G. Tian, A Parabolic flow of pluriclosed metrics,  {{\em Int. Math. Res. Notices}  {\bf 2010} (2010), 3101--3133.}

\bibitem{ST2} J. Streets,  G. Tian, Regularity results for pluriclosed flow, Preprint {\tt arXiv:1008.2794}.

\bibitem{strominger}  A. Strominger, Superstrings with torsion, {\em Nuclear Phys. B} {\bf 274} (1986) 253--284.

\bibitem{Sw} A. Swann, Twisting Hermitian and hypercomplex geometries,  {{\em Duke Math. J.} {\bf 155} (2010), 403--431.}

\bibitem{weinkove}
B. Weinkove, The Calabi-Yau equation on almost-K\"ahler four manifolds,
\emph{J. Differential Geom.}  {\bf 76}  (2007),  no. 2, 317--349.

\end{thebibliography}
\end{document}